\definecolor{myblue}{RGB}{80,80,160}
\definecolor{mygreen}{RGB}{80,160,80}
\newtheorem{theorem}{Theorem}[section]
\newtheorem{proposition}[theorem]{Proposition}
\newtheorem*{claim}{Claim}
\newtheorem{lemma}[theorem]{Lemma}
\newtheorem*{proposition-nonumber}{Proposition}
\theoremstyle{definition}
\def\bbR{\mathbb{R}}
\def\bbZ{\mathbb{Z}}
\def\bolda{\boldsymbol{a}}
\def\boldx{\boldsymbol{x}}
\def\boldy{\boldsymbol{y}}
\def\boldz{\boldsymbol{z}}
\def\calF{\mathcal{F}}
\def\leq{\leqslant}
\def\geq{\geqslant}
\begin{document} 

\title{Maximal workload, minimal workload, maximal workload difference: optimizing all criteria at once}

\author{S\'ebastien Deschamps}
\address{S. Deschamps,
Saint-Gobain Recherche, France}
\email{sebastien.deschamps2@saint-gobain.com}

\author{Fr\'ed\'eric Meunier}
\address{F. Meunier, CERMICS, \'Ecole des Ponts, France}
\email{frederic.meunier@enpc.fr}

\thanks{The second author is the corresponding author.}

\begin{abstract}
    In a simple model of assigning workers to tasks, every solution that minimizes the load difference between the most loaded worker and the least loaded one actually minimizes the maximal load and maximizes the minimal load. This can be seen as a consequence of standard results of optimization over polymatroids. We show that similar phenomena still occur in close models, simple to state, and that do not enjoy any polymatroid structure.
\end{abstract}

\keywords{Bipartite graph, load balancing, fairness}

\subjclass[2020]{90C29}

\maketitle 

\section{Introduction}\label{sec:intro}

Let $G=(U,W,E)$ be a bipartite graph. Assume given non-negative real numbers $d_u$ for $u \in U$. Let $X\coloneqq \{\boldx\in \bbR_{\geq 0}^E \colon \sum_{e\in\delta(u)} x_e = d_u \; \forall u \in U\}$. Here, $\delta(v)$ denotes the edges incident to a vertex $v$. For $\boldx \in X$, denote by $\ell^{\max}(\boldx)$ (resp.\ $\ell^{\min}(\boldx)$) the quantity $\max_{w\in W}\sum_{e \in \delta(w)} x_e$ (resp.\ $\min_{w \in W}\sum_{e \in \delta(w)} x_e$). Then we have this fact, which might look surprising at first glance. 

\begin{proposition}\label{prop:ini}
Over $X$, every $\boldx$ minimizing $\ell^{\max}(\cdot) - \ell^{\min}(\cdot)$ minimizes $\ell^{\max}(\cdot)$ and maximizes $\ell^{\min}(\cdot)$.
\end{proposition}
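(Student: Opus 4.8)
The plan is to reduce the statement to a single assertion — that some feasible point $\boldx^{*}$ is optimal for all three criteria at once — and then to produce such a point as a minimizer of a strictly convex separable function of the worker loads. Write $\ell_{w}(\boldx)=\sum_{e\in\delta(w)}x_{e}$ for the load of $w\in W$, assume $X\neq\varnothing$ (otherwise there is nothing to prove), and observe that $X$ is compact, so $M^{*}\coloneqq\min_{\boldx\in X}\ell^{\max}(\boldx)$ and $m^{*}\coloneqq\max_{\boldx\in X}\ell^{\min}(\boldx)$ are attained. For every $\boldx\in X$ one has $\ell^{\max}(\boldx)\geq M^{*}$ and $\ell^{\min}(\boldx)\leq m^{*}$, hence
\[
\ell^{\max}(\boldx)-\ell^{\min}(\boldx)=\bigl(\ell^{\max}(\boldx)-M^{*}\bigr)+(M^{*}-m^{*})+\bigl(m^{*}-\ell^{\min}(\boldx)\bigr)\geq M^{*}-m^{*},
\]
the two outer terms being nonnegative. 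Thus, once I exhibit one $\boldx^{*}\in X$ with $\ell^{\max}(\boldx^{*})=M^{*}$ and $\ell^{\min}(\boldx^{*})=m^{*}$, the displayed inequality becomes an equality at $\boldx^{*}$ and shows that the minimizers of $\ell^{\max}(\cdot)-\ell^{\min}(\cdot)$ are exactly the $\boldx$ with $\ell^{\max}(\boldx)=M^{*}$ and $\ell^{\min}(\boldx)=m^{*}$; in particular they minimize $\ell^{\max}(\cdot)$ and maximize $\ell^{\min}(\cdot)$, which is the claim.

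To build such a $\boldx^{*}$, I would take any minimizer of $\sum_{w\in W}\ell_{w}(\boldx)^{2}$ over $X$ (it exists by compactness) and check that $\ell^{\max}(\boldx^{*})=M^{*}$ and $\ell^{\min}(\boldx^{*})=m^{*}$, the two cases being symmetric. Suppose $L\coloneqq\ell^{\max}(\boldx^{*})>M^{*}$. The aim is to find a feasible \emph{transfer}: an alternating walk $w_{0},u_{1},w_{1},\dots,u_{k},w_{k}$ in $G$ with distinct $w_{0},\dots,w_{k}$, with $\ell_{w_{0}}(\boldx^{*})=L$, $\ell_{w_{k}}(\boldx^{*})<L$, and $x^{*}_{w_{i-1}u_{i}}>0$ for all $i$. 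Decreasing $\boldx^{*}$ by $\varepsilon$ on each edge $w_{i-1}u_{i}$ and increasing it by $\varepsilon$ on each edge $u_{i}w_{i}$ preserves every equality $\sum_{e\in\delta(u)}x_{e}=d_{u}$, stays in $X$ for $\varepsilon>0$ small, decreases $\ell_{w_{0}}$ by $\varepsilon$, increases $\ell_{w_{k}}$ by $\varepsilon$, and leaves all other loads unchanged; since $t\mapsto t^{2}$ is strictly convex and $\ell_{w_{0}}(\boldx^{*})>\ell_{w_{k}}(\boldx^{*})$, this strictly decreases $\sum_{w}\ell_{w}^{2}$ for $\varepsilon$ small, contradicting the choice of $\boldx^{*}$.

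The remaining task — and the step I expect to be the real obstacle — is the combinatorial lemma that a transfer exists whenever $L>M^{*}$. I would put an arc $w\to w'$ on $W$ whenever some $u$ has $x^{*}_{wu}>0$ and $uw'\in E$, let $W_{\max}$ be the set of most-loaded workers, and let $R\supseteq W_{\max}$ be the set of workers reachable from $W_{\max}$ in this digraph; then $R=W_{\max}$ precisely when no transfer exists (a shortest walk from $W_{\max}$ to a strictly less loaded worker is exactly a transfer, and conversely). The crux is that $R$ is ``closed'': if $u$ has a positive incident edge into $R$, then $N(u)\subseteq R$, where $N(u)$ is the set of neighbours of $u$ (indeed every neighbour of $u$ is then reachable from $R$, hence from $W_{\max}$). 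Summing $\sum_{e\in\delta(u)}x^{*}_{e}=d_{u}$ over such $u$ gives $\sum_{w\in R}\ell_{w}(\boldx^{*})=\sum_{u:\,N(u)\subseteq R}d_{u}$ (a $u$ with $N(u)\subseteq R$ but no positive incident edge has $d_{u}=0$). On the other hand, in any $\boldx\in X$ every $u$ with $N(u)\subseteq R$ sends all of $d_{u}$ into $R$, so $\sum_{w\in R}\ell_{w}(\boldx)\geq\sum_{u:\,N(u)\subseteq R}d_{u}$. Consequently, for every $\boldx\in X$,
\[
\ell^{\max}(\boldx)\ \geq\ \frac{1}{|R|}\sum_{w\in R}\ell_{w}(\boldx)\ \geq\ \frac{1}{|R|}\sum_{u:\,N(u)\subseteq R}d_{u}\ =\ \frac{1}{|R|}\sum_{w\in R}\ell_{w}(\boldx^{*})\ =\ L,
\]
the last equality since $R=W_{\max}$; this gives $M^{*}\geq L$, contradicting $L>M^{*}$. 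The $\ell^{\min}$ case is symmetric, with arcs reversed: the set $R^{-}$ of workers that \emph{can reach} some least-loaded worker receives load only from those $u$ all of whose positive incident edges fall inside $R^{-}$, which yields the matching bound $\sum_{w\in R^{-}}\ell_{w}(\boldx)\leq\sum_{u:\,N(u)\cap R^{-}\neq\varnothing}d_{u}$ and forces $m^{*}\leq\ell^{\min}(\boldx^{*})$ unless a transfer to a least-loaded worker exists. I should add that this whole argument can be bypassed: the image of $X$ under $\boldx\mapsto(\ell_{w}(\boldx))_{w\in W}$ is the base polytope of the polymatroid on $W$ whose rank function is the (submodular) weighted coverage function $A\mapsto\sum_{u\in N(A)}d_{u}$, and such a base polytope contains a point majorized by every other one, which simultaneously minimizes $\ell^{\max}$ and maximizes $\ell^{\min}$; the exchange argument above is just a self-contained re-proof of that fact, and it is the version I would write out in full.
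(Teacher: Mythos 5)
Your proof is correct, and it takes a genuinely different route from the paper's. The paper obtains Proposition~\ref{prop:ini} as a corollary of Theorem~\ref{thm:main_a}: when $\ell^{\max}-\ell^{\min}$ is bounded away from $0$ on $X$ it invokes the theorem (whose proof is a local perturbation argument on a minimizer of the difference, via Lemma~\ref{lem:claim} and a convex-combination step), and in the degenerate case where all loads can be equalized it concludes by the averaging identity $\sum_w \ell_w(\boldx)=\sum_u d_u$, which forces every equalized point to attain the lower bound $\frac{1}{|W|}\sum_u d_u$ on $\ell^{\max}$. You instead prove the statement directly and in a self-contained way: you reduce it to exhibiting one point that simultaneously attains $M^*$ and $m^*$, produce that point as a minimizer of the strictly convex potential $\sum_w \ell_w^2$, and certify its optimality by the reachability-set/cut identity $\sum_{w\in R}\ell_w(\boldx^*)=\sum_{u:N(u)\subseteq R}d_u$ together with the lower bound $\sum_{w\in R}\ell_w(\boldx)\geq\sum_{u:N(u)\subseteq R}d_u$ valid for all of $X$ (and its mirror image for $\ell^{\min}$). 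This is, as you note, a hands-on re-derivation of the lexicographically optimal base of the associated polymatroid, which is precisely the ``standard result'' the paper cites as an alternative justification; all the steps check out, including the closedness of $R$ and the averaging over $R=W_{\max}$. What each approach buys: the paper's route reuses the machinery of the weighted generalization and so comes almost for free once Theorem~\ref{thm:main_a} is in place, while yours is elementary, avoids the generalization entirely, and additionally characterizes the minimizers of $\ell^{\max}-\ell^{\min}$ as exactly the points attaining both $M^*$ and $m^*$. Be aware, though, that your argument leans on the conservation law $\sum_w\ell_w(\boldx)=\sum_u d_u$ and on averaging unweighted loads over $R$, so it does not extend to the weighted setting of Theorem~\ref{thm:main_a}; this is exactly why the paper develops a different technique there.
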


What might make it counter-intuitive is that a same solution optimizes three criteria at the same time, while these criteria do not share the same monotonicity. An application is the following. Let $U$ be tasks to be performed, $W$ be workers, and $d_u$ be the total time to be spent on performing task $u$ (``demand''). Each worker $w$ can only work on a subset of the tasks (the neighborhood of $w$ in $G$). Given $\boldx \in X$, the quantity $x_e$ is the time spent by the endpoint in $W$ on performing the endpoint in $U$. With this interpretation, $\sum_{e \in \delta(w)} x_e$ is the total time worked by $w \in W$ (``load''), and $\ell^{\max}(\boldx)$ (resp.\ $\ell^{\min}(\boldx)$) is the maximal (resp.\ minimal) time worked among the workers. Each of these criteria can be seen as privileging some fairness between the workers. So the proposition above translates into: {\em If you minimize the difference of working times between the most loaded worker and the least loaded one, then you also minimize the load of the most loaded one and maximize the load of the least loaded one.} 
 
Proposition~\ref{prop:ini} can easily be derived from standard results on lexicographically optimal bases of polymatroids (which go back to the works of Meggido~\cite{megiddo1974optimal} and Fujishighe~\cite{fujishige1980lexicographically}). For such results, the polymatroid structure looks crucial. Maybe more surprising is the fact that the proposition still holds if $d$ takes integer values, and $X$ is restricted to integer points; this is then a consequence of a recent theorem by Frank and Murota~\cite{frank2022decreasing}, who also propose polynomial algorithms to handle the related optimization problems~\cite{frank2022decreasing-algo}. Special cases were obtained before; see, e.g., the work by Harvey et al.~\cite{harvey_semi-matchings_2006}, which finds its motivation in load balancing as well.

In the present work, we show that Proposition~\ref{prop:ini} above can be kept even without the structure of a polymatroid. Assume given in addition vectors $\bolda^{v} \in \bbR_{>0}^{\delta(v)}$ attached to the vertices $v$ of $G$. For a point $\boldx$ and a subset $A$ of its indices, we denote by $\boldx_A$ the vector $(x_i)_{i\in A}$. We change now the definition of $X$ to $X^{\bolda} \coloneqq \{\boldx \in \bbR_{\geq 0}^E \colon \bolda^u \cdot \boldx_{\delta(u)} = d_u \; \forall u \in U\}$
and the definition of $\ell^{\max}(\boldx)$ and $\ell^{\min}(\boldx)$ to
$\ell^{\max}(\boldx) \coloneqq \max_{w\in W} \bolda^w \cdot \boldx_{\delta(w)}$ and $\ell^{\min}(\boldx) \coloneqq \min_{w\in W} \bolda^w \cdot \boldx_{\delta(w)}$. The starting setting is the special case when $\bolda^v$ is the all-one vector. Our first result is the following. 

\begin{theorem}\label{thm:main_a}
Suppose $\ell^{\max}(\boldx) > \ell^{\min}(\boldx)$ for all $\boldx \in X^{\bolda}$. Then over $X^{\bolda}$, every $\boldx$ minimizing $\ell^{\max}(\cdot) - \ell^{\min}(\cdot)$ minimizes $\ell^{\max}(\cdot)$ and maximizes $\ell^{\min}(\cdot)$.
\end{theorem}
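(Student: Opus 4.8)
The plan is to argue about a fixed minimizer and to obtain a contradiction by a local improvement, the improvement being found through an augmenting‑path argument in an auxiliary digraph on $W$. First, the routine set‑up: $X^{\bolda}$ is a polytope (each $x_e$ is bounded above by $d_{u}/a^{u}_e$, with $u$ the endpoint of $e$ in $U$), which we may assume nonempty, so that $\mu\coloneqq\min_{X^{\bolda}}\ell^{\max}$ and $\nu\coloneqq\max_{X^{\bolda}}\ell^{\min}$ are attained. Since $\ell^{\max}(\boldx)-\ell^{\min}(\boldx)\geq\mu-\nu$ for every $\boldx\in X^{\bolda}$, the theorem is in fact equivalent to the identity $\min_{X^{\bolda}}\bigl(\ell^{\max}(\cdot)-\ell^{\min}(\cdot)\bigr)=\mu-\nu$ (a minimizer of the left‑hand side then has difference $\mu-\nu$ while $\ell^{\max}\geq\mu$ and $\ell^{\min}\leq\nu$, forcing equality in both), but I would argue directly. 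Fix a minimizer $\boldx^{*}$ of $\ell^{\max}(\cdot)-\ell^{\min}(\cdot)$ and suppose, for contradiction, that $\ell^{\max}(\boldx^{*})>\mu$ — the case $\ell^{\min}(\boldx^{*})<\nu$ being symmetric. Write $\ell_{w}(\boldx)\coloneqq\bolda^{w}\cdot\boldx_{\delta(w)}$, and let $W^{\bullet}$, $W^{\circ}$ be the sets of workers attaining $\ell^{\max}(\boldx^{*})$, $\ell^{\min}(\boldx^{*})$; the hypothesis gives $W^{\bullet}\cap W^{\circ}=\varnothing$ (and $\ell^{\max}(\boldx^{*})>\ell^{\min}(\boldx^{*})$). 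The target is a feasible direction at $\boldx^{*}$ along which $\ell^{\max}$ strictly decreases and $\ell^{\min}$ does not, which contradicts minimality of $\boldx^{*}$.

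Such directions I would describe combinatorially. A direction $\boldv$ is feasible at $\boldx^{*}$ exactly when $\bolda^{u}\cdot\boldv_{\delta(u)}=0$ for all $u\in U$ and $v_{e}\geq0$ whenever $x^{*}_{e}=0$; along it $\ell_{w}$ varies at rate $\bolda^{w}\cdot\boldv_{\delta(w)}$. Since $W$ bears no constraint, the linear space $\{\boldv:\bolda^{u}\cdot\boldv_{\delta(u)}=0\ \forall u\}$ is spanned by the \emph{elementary transfers}: for $u\in U$ and $uw,uw'\in E$, the vector $a^{u}_{uw'}\,\boldsymbol{e}_{uw}-a^{u}_{uw}\,\boldsymbol{e}_{uw'}$, which raises $\ell_{w}$ and lowers $\ell_{w'}$ by positive amounts while leaving every other load unchanged, and which is a feasible direction as soon as $x^{*}_{uw'}>0$. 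A conformal decomposition writes any feasible direction as a nonnegative combination of such transfers. I would encode them in a digraph $D$ on $W$, with an arc $w'\to w$ whenever some $u\in U$ has $uw'\in E$, $x^{*}_{uw'}>0$ and $uw\in E$. Concatenating transfers along a \emph{directed path} of $D$, and tuning their magnitudes (a positive ratio at each interior vertex) so that every intermediate load stays put, produces a feasible direction under which the tail of the path strictly decreases, its head strictly increases, and no other load moves.

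The argument then splits. If from every $w\in W^{\bullet}$ there is a $D$‑path to a worker of load strictly less than $\ell^{\max}(\boldx^{*})$, superpose the corresponding directions with small positive coefficients: every worker of $W^{\bullet}$ strictly decreases (it is the tail of its own path); no worker of $W^{\circ}$ decreases, since — lying outside $W^{\bullet}$, hence never a tail — it only appears as an interior or head vertex, where its load is unchanged or increases; and every remaining worker moves by an arbitrarily small amount, so for a small enough superposition $\ell^{\max}$ drops below $\ell^{\max}(\boldx^{*})$ while $\ell^{\min}$ is unchanged. This is the direction sought, a contradiction. Otherwise, pick $w_{0}\in W^{\bullet}$ with no such path and let $R$ be a terminal strongly connected component of $D$ reachable from $w_{0}$ (it exists, $D$ being finite); as $w_{0}$ has no escape, every vertex reachable from $w_{0}$ — in particular every vertex of $R$ — has load $\ell^{\max}(\boldx^{*})$, so $R\subseteq W^{\bullet}$ and, crucially via the hypothesis, $R\cap W^{\circ}=\varnothing$. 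Being $D$‑closed, $R$ forces every task carrying $\boldx^{*}$‑flow into $R$ to have all its $G$‑neighbours in $R$; call $U_{R}$ these tasks.

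The main obstacle is to rule out this last configuration, and this is where the interplay of the bipartite structure with the hypothesis enters. In the polymatroid case (all $\bolda^{v}$ equal to the all‑one vector) one is done immediately: $\sum_{w\in R}\ell_{w}(\boldx)$ equals the total demand of $U_{R}$, a quantity independent of $\boldx$, whence $\ell^{\max}(\boldx)\geq\tfrac1{|R|}\sum_{u\in U_{R}}d_{u}=\ell^{\max}(\boldx^{*})$ for all $\boldx\in X^{\bolda}$, contradicting $\ell^{\max}(\boldx^{*})>\mu$. Here the two weight systems differ: $\sum_{w\in R}\ell_{w}(\boldx)=\sum_{w\in R}\sum_{uw\in E}a^{w}_{uw}x_{uw}$, and although $\sum_{uw\in E}a^{u}_{uw}x_{uw}=d_{u}$, the coefficients $a^{w}_{uw}$ need not match the $a^{u}_{uw}$, so this sum is \emph{not} point‑independent — inside a task $u\in U_{R}$ one may reroute flow among $\delta(u)$, preserving its demand yet lowering the aggregate $R$‑load whenever $u$ is not already concentrated on an edge minimizing the ratio $a^{w}_{uw}/a^{u}_{uw}$. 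If every task of $U_{R}$ is so concentrated, the displayed sum is again minimized at $\boldx^{*}$ and the same contradiction with $\ell^{\max}(\boldx^{*})>\mu$ ensues (no use of the hypothesis yet). If some task of $U_{R}$ is not, I would combine an intra‑task reroute there with elementary transfers internal to $R$ — available because $R$ is strongly connected in $D$ — so as to lower the load of every worker of $R$ at once, once more producing the forbidden direction; since the tasks of $U_{R}$ have all neighbours in $R$ and $R\cap W^{\circ}=\varnothing$, this modification does not touch $W^{\circ}$, so $\ell^{\min}$ is unharmed. The delicate point — the part I expect to need genuine care, possibly an induction on $|E|$ or on the number of ``fractional'' tasks — is controlling the multiplicative gains that the weighted transfers carry, so that the redistribution inside $R$ can absorb the reroute's surplus without pushing any worker back above $\ell^{\max}(\boldx^{*})$. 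Running the symmetric argument for $W^{\circ}$ gives $\ell^{\min}(\boldx^{*})=\nu$, and the theorem follows.
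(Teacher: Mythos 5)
Your reduction to the ``closed'' configuration is sound and essentially parallels the paper's Lemma~\ref{lem:claim} (your Case~1 is that lemma with $D$-paths in place of single arcs, iterated), but the step that rules out the closed configuration --- $R\subseteq W^{\bullet}$ terminal in $D$, with $N(U_R)=R$ --- has a genuine gap, and it is exactly the point you flag as ``delicate''. Your dichotomy is: either every task of $U_R$ is concentrated on edges minimizing $a^w_{uw}/a^u_{uw}$ (fine: the weighted sum bound then gives the contradiction), or some task is not, in which case you claim that an intra-task reroute combined with transfers internal to $R$ lowers every load of $R$. That second branch is false as stated. Take $R=\{w_1,w_2\}$, $U_R=\{u\}$ with edges $uw_1,uw_2$ only, $a^u_{uw_1}=a^u_{uw_2}=1$, $a^{w_1}_{uw_1}=1$, $a^{w_2}_{uw_2}=2$, and $x^*_{uw_1}=2x^*_{uw_2}=2d_u/3$: both loads equal $2d_u/3$, $R$ is a terminal strongly connected component, and $u$ is \emph{not} concentrated on the ratio-minimizing edge $uw_1$; yet every feasible direction satisfies $v_{uw_1}=-v_{uw_2}$, so the two load variations $v_{uw_1}$ and $-2v_{uw_1}$ always have opposite signs and no direction lowers both loads. (Indeed $\boldx^*$ already minimizes $\max(\ell_{w_1},\ell_{w_2})$ here: ``not concentrated'' simply does not detect improvability, so this local information alone cannot produce the contradiction --- you would have to invoke the global hypothesis $\ell^{\max}(\boldx^*)>\mu$, and the proposal does not say how.) A secondary, more easily repaired issue: even when all of $R$ can be lowered, $R$ may be a proper subset of $W^{\bullet}$, so $\ell^{\max}$ does not drop; you need an outer induction on $|W^{\bullet}|$, as the paper runs via repeated applications of its lemma.

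The paper closes exactly this gap with an idea absent from your proposal: once the closed configuration $N(U^{\max}(\bar\boldx))=W^{\max}(\bar\boldx)$ is reached, it takes an \emph{actual minimizer} $\boldy^*$ of $\ell^{\max}(\cdot)$ and moves from $\bar\boldx$ toward $\boldy^*$ only on the edges of $\delta(U^{\max}(\bar\boldx))$. Closedness guarantees these edges meet $W$ only in $W^{\max}(\bar\boldx)$, and $\bar x_e=0$ on every edge joining $W^{\max}(\bar\boldx)$ to $U\setminus U^{\max}(\bar\boldx)$, so at $t=1$ each $w\in W^{\max}(\bar\boldx)$ has load at most $\bolda^w\cdot\boldy^*_{\delta(w)}\leq\ell^{\max}(\boldy^*)<\ell^{\max}(\bar\boldx)$; by linearity in $t$ all these loads strictly decrease simultaneously while every other load is frozen. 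Comparing with the optimum does the balancing of multiplicative gains that your hand-built transfers cannot. If you want to keep your combinatorial framework, the statement to prove in Case~2 is an LP-duality alternative (``either a direction lowering all of $R$ exists, or $\boldx^*$ minimizes $\max_{w\in R}\ell_w$ among points agreeing outside $\delta(U_R)$''), which again amounts to comparing against an optimizer rather than rerouting edge by edge.
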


By compactness, the existence of $\boldx \in X^{\bolda}$ minimizing $\ell^{\max}(\cdot) - \ell^{\min}(\cdot)$ is ensured as soon as $X^{\bolda}$ is non-empty. Theorem~\ref{thm:main_a} is a generalization of Proposition~\ref{prop:ini}, apart for unfortunate inputs where it is possible to have the quantities $\bolda^w \cdot \boldx_{\delta(w)}$ equal for all $w \in W$ (``it is possible to get all workers equally loaded''), something which is not expected to occur generically. There are actually examples showing that this condition is necessary. There are also examples showing that the theorem does not hold when $d$ takes integer values and $X^{\bolda}$ is restricted to integer points. These examples are described in Section~\ref{sec:ex}. The proof of Theorem~\ref{thm:main_a} is given in Section~\ref{sec:proof-a}. A short way to derive Proposition~\ref{prop:ini} from Theorem~\ref{thm:main_a} is also given there.

Our second result shows that when the non-negativity constraint is dropped, we get somehow the reverse situation where minimizing $\ell^{\max}(\cdot)$ or maximizing $\ell^{\min}(\cdot)$ actually minimizes $\ell^{\max}(\cdot) - \ell^{\min}(\cdot)$. In this case, we are able to consider even more general dependencies of the loads to the values $x_{uw}$. Assume given maps $f_v\colon \bbR^{\delta(v)} \rightarrow \bbR$ attached to the vertices $v$ of $G$. We extend the definitions of $\ell^{\max}(\cdot)$ and $\ell^{\min}(\cdot)$ to $\ell^{\max}(\boldx) \coloneqq \max_{w\in W} f_w(\boldx_{\delta(w)})$ and $\ell^{\min}(\boldx) \coloneqq \min_{w\in W} f_w(\boldx_{\delta(w)})$. Let $X^f \coloneqq \{\boldx \in \bbR^E \colon f_u(\boldx_{\delta(u)}) = d_u \; \forall u \in U\}$. Note that, contrary to what holds for $X$ and $X^{\bolda}$, the $\boldx$ are not constrained to have non-negative components. Consistently, the $d_u$ are no longer assumed to be non-negative (but this does not really matter since the problem and the theorem are invariant by translation on $\bbR$).

Let $A$ be a finite set. To ease the statement of the theorem, we define a certain set $\calF_A$ of maps $\bbR^A\rightarrow\bbR$. A map $f\colon \bbR^A\rightarrow\bbR$ belongs to $\calF_A$ if $f(\boldx_A)$ is an increasing self-bijection of $\bbR$ when restricted to any component $x_i$ (the components of $\boldx_{A\setminus\{i\}}$ being then fixed). Linear maps $\boldx_A \mapsto \bolda \cdot \boldx_A$ with $\bolda \in \bbR_{>0}^A$ (which are maps considered in Theorem~\ref{thm:main_a}) belong to $\calF_A$. Moreover, the set $\calF_A$ is stable by many binary operations (e.g., addition and maximum), which shows that it actually contains quite complicated maps.

\begin{theorem}\label{thm:main_F}
    Suppose that each $f_v$ belongs to $\calF_{\delta(v)}$ and that $G$ is connected. Then, for every $\boldx\in X^f$ such that $\ell^{\max}(\boldx)> \ell^{\min}(\boldx)$, there exists $\boldx'\in X^f$ such that $\ell^{\max}(\boldx) > \ell^{\max}(\boldx') = \ell^{\min}(\boldx') > \ell^{\min}(\boldx)$.
\end{theorem}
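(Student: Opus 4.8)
The plan is to start from an $\boldx \in X^f$ with $\ell^{\max}(\boldx) > \ell^{\min}(\boldx)$ and to describe a continuous deformation that simultaneously pulls down the heavy workers and pushes up the light ones until the two extreme loads meet. The key structural observation is that $G$ is connected and bipartite, so there is enough ``room'' to transfer load: if $w$ is a loaded worker and $w'$ a lightly loaded one, I would look at a path in $G$ from $w$ to $w'$ alternating between $W$-vertices and $U$-vertices. Along each $U$-vertex $u$ on such a path, the demand constraint $f_u(\boldx_{\delta(u)}) = d_u$ ties together two incident edge-variables; since $f_u \in \calF_{\delta(u)}$, increasing one of these variables forces a unique compensating change (a decrease, after composing with the inverse self-bijection) in the other, so the constraint is automatically maintained. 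This lets me convert ``push a bit of load off $w$'' into ``push a bit of load onto $w'$'' through the path, without disturbing any $U$-constraint or any worker not on the path.

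First I would set $L \coloneqq \ell^{\max}(\boldx)$ and $\ell \coloneqq \ell^{\min}(\boldx)$, and consider the function that, for a target level $t$ between $\ell$ and $L$, asks: can every worker with load $> t$ be brought down to exactly $t$ and every worker with load $< t$ be brought up to exactly $t$, feeding the excess/deficit through paths in $G$? Concretely, I would define, for each $w\in W$, the desired signed change $\delta_w(t) \coloneqq$ (load of $w$ at $\boldx$) $- \,t$ if this has the ``right sign'' and $0$ if $w$ is already on the correct side, and then I need a flow-like object in the bipartite graph routing these changes. Because the $f_u$ are only required to be in $\calF_{\delta(u)}$ and not linear, the right language is not a linear network flow but a monotone-parametrized system: fix a spanning tree $T$ of $G$ (connectedness), root it, and use the tree to propagate changes. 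For a leaf worker one sets its load to $t$, which via its unique tree-edge and the $\calF$-property of the parent $U$-vertex determines a forced adjustment further up; iterating from the leaves inward, every adjustment is uniquely determined and depends continuously (indeed monotonically) on $t$.

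The main step is then an intermediate-value / continuity argument. Define $\Phi(t) \coloneqq \ell^{\max}(\boldx(t)) - \ell^{\min}(\boldx(t))$ where $\boldx(t)$ is the point produced by the tree-propagation at level $t$. At $t = \ell$ only the light worker(s) need to move and the heavy ones stay, so $\ell^{\max}(\boldx(\ell)) = L$ while $\ell^{\min}(\boldx(\ell)) > \ell$; symmetrically at $t = L$ one has $\ell^{\min}(\boldx(L)) = \ell$ and $\ell^{\max}(\boldx(L)) < L$. The map $t \mapsto \boldx(t)$ is continuous (each $\calF$-map and its partial inverses are continuous, being increasing self-bijections of $\bbR$), hence $\ell^{\max}(\boldx(\cdot))$ and $\ell^{\min}(\boldx(\cdot))$ are continuous; moreover $\ell^{\max}(\boldx(t))\ge t \ge \ell^{\min}(\boldx(t))$ for all $t$, with $\ell^{\max}$ non-increasing and $\ell^{\min}$ non-decreasing in a suitable monotone sense coming from the $\calF$-structure. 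So $g(t) \coloneqq \ell^{\max}(\boldx(t)) - t$ satisfies $g(\ell) = L - \ell > 0$ and $g(L) \le 0$; by the intermediate value theorem there is $t^\star$ with $\ell^{\max}(\boldx(t^\star)) = t^\star$, and then necessarily $\ell^{\min}(\boldx(t^\star)) = t^\star$ as well, giving $\boldx' \coloneqq \boldx(t^\star)$ with $L > t^\star > \ell$; the strict inequalities $\ell^{\max}(\boldx) > t^\star$ and $t^\star > \ell^{\min}(\boldx)$ follow because at the endpoints the gap is strictly positive and the extreme loads moved strictly.

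The step I expect to be the main obstacle is making the ``tree-propagation at level $t$'' genuinely well defined and continuous when several workers must be adjusted at once and their correcting paths overlap: propagating changes through a shared $U$-vertex means one has to solve, at that vertex, an equation of the form $f_u(\cdots) = d_u$ in one remaining free variable given that two or more of its neighbors have already been pinned — this is solvable and unique by the $\calF_{\delta(u)}$ hypothesis, but one must check that the resulting composition over the whole tree stays a well-defined increasing self-bijection in the parameter $t$ and does not run into sign obstructions (e.g. a worker that was supposed to go up being forced down by a sibling's demand). Handling this cleanly will likely require processing the tree from the leaves to the root and invoking, at each internal $U$-vertex, the fact that a single coordinate of an $\calF$-map is an increasing bijection of all of $\bbR$, so the required value is always attained; the connectivity of $G$ is exactly what guarantees a single tree through which all corrections can be funneled.
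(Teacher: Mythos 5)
Your overall architecture (pass to a spanning tree, then deform within the tree) matches the paper's, but the central object of your plan --- the family $\boldx(t)$ obtained by ``bringing every worker's load to exactly $t$'' --- does not exist for a range of $t$, and this is not merely the technical obstacle you flag at the end. A spanning tree on $U\cup W$ has $|U|+|W|-1$ edges, and you are imposing $|U|$ demand constraints plus $|W|$ load constraints (one per worker, whether the prescribed value is $t$ or the worker's original load): the system is over-determined by one equation. Concretely, in the all-ones linear case $\sum_{w\in W}\ell_w=\sum_{u\in U}d_u$ is an identity on $X^f$, so all loads can equal $t$ only for the single value $t=\frac{1}{|W|}\sum_{u}d_u$; there is no one-parameter family on which to run the intermediate value theorem. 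The paper's Lemma~\ref{lem:fixing} makes exactly this point: on a tree one may prescribe the loads of all workers \emph{except one}, and the remaining load is then a function of the others. If you exempt a root worker $w_0$ so that $\boldx(t)$ becomes well defined, then $\ell^{\max}(\boldx(t))=\max\bigl(t,\,f_{w_0}(\boldx(t)_{\delta(w_0)})\bigr)$, your claimed boundary values $\ell^{\max}(\boldx(\ell))=L$ and $\ell^{\max}(\boldx(L))<L$ no longer follow, and the entire content of the proof shifts to controlling the uncontrolled root load, which your sketch does not address.

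That missing work is precisely what the paper supplies. It first proves (Lemma~\ref{lem:existence}) that on a tree a minimizer $\boldx^*$ of $\ell^{\max}(\cdot)$ over $X^f$ exists --- nontrivial since $X^f$ is unbounded; the argument is a parity count on a maximal path whose edge values would alternately tend to $\pm\infty$ --- and then shows $\ell^{\max}(\boldx^*)=\ell^{\min}(\boldx^*)$ by an extremal argument: among minimizers, take one with the fewest maximally loaded workers and perturb along a single path from an underloaded $w'$ to a maximally loaded $w''$ using the composed inverse maps of Lemma~\ref{lem:inv}, keeping every other load fixed. This single-path perturbation trades load between exactly two workers and never meets the ``sibling'' conflicts you anticipate. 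Your IVT idea could perhaps be salvaged by parametrizing $X^f$ on the tree by the $|W|-1$ free loads and seeking a fixed point of the induced root load, but that still requires the properness/boundary analysis your proposal omits; as written, the argument has a genuine gap at its main step.
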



When we specialize the maps $f_v$ to $\boldx_{\delta(v)} \mapsto \bolda^v \cdot \boldx_{\delta(v)}$ with $\bolda^v \in \bbR_{>0}^{\delta(v)}$, we are exactly in the setting of Theorem~\ref{thm:main_a}, except for the non-negativity constraint. Theorem~\ref{thm:main_F} implies in particular that if there exists $\boldx \in X^f$ minimizing $\ell^{\max}(\boldx)$, then this $\boldx$ is such that $\ell^{\max}(\boldx) = \ell^{\min}(\boldx)$. This is thus the opposite phenomenon as that of Theorem~\ref{thm:main_a}. Besides, even if $X^f$ is non-empty as soon as no vertex in $U$ is isolated, the infimum of $\ell^{\max}(\boldx)$ over $X^f$ is actually not necessarily attained. An example is given in Section~\ref{sec:ex}. The proof of Theorem~\ref{thm:main_F} is given in Section~\ref{sec:proof-F}. 

The paper ends with a few comments and open questions (Section~\ref{sec:concl}).

\newpage

\subsection*{Acknowledgments} During a class given by the second author several years ago, a student, Quentin Ego, claimed with conviction that Proposition~\ref{prop:ini} above holds. The second author told him that such a result was impossible, promised a counter-example, and realized only a few hours later that the student was correct. Therefore, the authors are grateful to him and his intuition, which made them investigate further the topic.

\section{Counter-examples to possible extensions}\label{sec:ex}

\subsection{The conclusion of Theorem~\ref{thm:main_a} does not necessarily hold when \texorpdfstring{$\ell^{\max}(\boldx) = \ell^{\min}(\boldx)$}{} for some \texorpdfstring{$\boldx \in X^{\bolda}$}{}}

\begin{figure}
\begin{tikzpicture}[thick,
  every node/.style={draw,circle},
  fsnode/.style={fill=myblue},
  ssnode/.style={fill=mygreen},
  every fit/.style={ellipse,draw,inner sep=-0.4cm,text width=2cm, text height=2cm},
  -,
  shorten >= 3pt,
  shorten <= 3pt
]

\begin{scope}[start chain=going below,node distance=7mm]
\foreach \i in {1,2}
  \node[fsnode,on chain] (f\i) [label=left: $u_{\i}$] {};
\end{scope}

\begin{scope}[xshift=4cm,yshift=1.0cm,start chain=going below,node distance=7mm]
\foreach \i in {1,2,...,4}
  \node[ssnode,on chain] (s\i) [label=right: $w_{\i}$] {};
\end{scope}



\draw[red] (f1) -- (s2);
\draw[red] (f2) -- (s3);
\draw[red] (f2) -- (s4);
\draw (f1) -- (s1);
\draw (f2) -- (s2);
\draw (f1) -- (s3);

\end{tikzpicture}
\caption{\label{fig:example}The conclusion of Theorem~\ref{thm:main_a} does not necessarily hold if there exists $\boldx \in X^{\bolda}$ such that $\ell^{\max}(\boldx)=\ell^{\min}(\boldx)$.}
\end{figure}

Consider the graph of Figure~\ref{fig:example}. Set $d_{u_1}=21$ and $d_{u_2}=14$, and let $\bolda^u \in \bbR^{\delta(u)}$ be the all-one vector and $\bolda^w \in \bbR^{\delta(w)}$ be defined by
\[
a^w_e=\left\{\begin{array}{rl}
10 & \text{if $e$ is in red on the figure,} \\
1 & \text{if $e$ is in black on the figure.} \\
\end{array}
\right.
\]
 The point $(x^*_{u_1w_1},x^*_{u_1w_2},x^*_{u_1w_3},x^*_{u_2w_2},x^*_{u_2w_3},x^*_{u_2w_4})=(20,1,0,10,2,2)$
is in $X^{\bolda}$ and such that $\ell^{\max}(\boldx^*)=\ell^{\min}(\boldx^*)=20$. Yet, the point $(\bar x_{u_1w_1},\bar x_{u_1w_2},\bar x_{u_1w_3},\bar x_{u_2w_2},\bar x_{u_2w_3},\bar x_{u_2w_4})=(11,0,10,14,0,0)$ is in $X^{\bolda}$ and such that $\ell^{\max}(\boldx)=14$, which shows that even if $\boldx^*$ does minimize $\ell^{\max}(\cdot)-\ell^{\min}(\cdot)$, it does not minimize $\ell^{\max}(\cdot)$.

\subsection{Counter-example to the integral version of Theorem~\ref{thm:main_a}} \label{subsec:no-int}
Consider the graph of Figure~\ref{fig:example-int}. Set $d_{u_1}=9$ and $d_{u_2}=10$, and let $\bolda^u \in \bbR^{\delta(u)}$ be the all-one vector and $\bolda^w \in \bbR^{\delta(w)}$ be defined by
\[
a^w_e=\left\{\begin{array}{rl}
100 & \text{if $e$ is in red on the figure,} \\
1 & \text{if $e$ is in black on the figure.} \\
\end{array}
\right.
\] 

\begin{claim}
There exists an $\boldx\in X^{\bolda} \cap \bbZ^E$ such that $\ell^{\max}(\boldx) \leq 9$ and every such $\boldx$ satisfies $\ell^{\max}(\boldx) - \ell^{\min}(\boldx) \geq 4$. 
\end{claim}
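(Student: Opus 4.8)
The plan is to prove the two assertions of the claim in turn, both resting on a single observation: \emph{if $\boldx\in X^{\bolda}\cap\bbZ^E$ satisfies $\ell^{\max}(\boldx)\leq 9$, then $x_e=0$ for every red edge $e$}. Indeed, for a red edge $e=uw$ one has $a^w_e=100$, so $x_e\geq 1$ would already force $\bolda^w\cdot\boldx_{\delta(w)}\geq 100>9$; since the entries of $\boldx$ are non-negative integers, only $x_e=0$ survives.

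For the existence part I would exhibit an explicit point: assign $0$ to every red edge, send the whole demand $d_{u_1}=9$ along the (unique) black edge at $u_1$, and split $d_{u_2}=10$ into $5+5$ over the two black edges at $u_2$. A direct check shows this $\boldx$ lies in $X^{\bolda}\cap\bbZ^E$, that the worker at the far end of $u_1$'s black edge has load exactly $9$, and that the two other workers have load $5$ each; hence $\ell^{\max}(\boldx)=9$ (and $\ell^{\min}(\boldx)=5$, so the bound $4$ is in fact best possible).

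For the lower bound, let $\boldx\in X^{\bolda}\cap\bbZ^E$ be arbitrary with $\ell^{\max}(\boldx)\leq 9$. By the observation, $\boldx$ vanishes on all red edges, so the constraint $\bolda^{u_1}\cdot\boldx_{\delta(u_1)}=d_{u_1}$ (with $\bolda^{u_1}$ the all-one vector) forces the value $9$ on $u_1$'s black edge; the worker at its far end then has load at least $9$, whence $\ell^{\max}(\boldx)=9$ and the remaining (red) edges at that worker carry $0$. Likewise $\bolda^{u_2}\cdot\boldx_{\delta(u_2)}=d_{u_2}$ says the two black edges at $u_2$ carry non-negative integers summing to $10$; but, the red edges being zero, these two numbers are precisely the loads of the two remaining workers, so the smaller of them is at most $5$ and $\ell^{\min}(\boldx)\leq 5$. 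Therefore $\ell^{\max}(\boldx)-\ell^{\min}(\boldx)\geq 9-5=4$.

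I do not expect a genuine obstacle: the argument is short once the zero-forcing of the red edges is noticed, and what remains is bookkeeping. Two points deserve care. First, non-negativity \emph{and} integrality are both used and should be invoked explicitly — over the reals one could place a tiny positive amount (up to $9/100$) on a red edge at $u_1$ without violating $\ell^{\max}\leq 9$, which is exactly why the integral phenomenon here departs from Theorem~\ref{thm:main_a}. Second, one must make sure the chain of forced equalities — red edges zero $\Rightarrow$ value $9$ on $u_1$'s black edge $\Rightarrow$ load $9$ at its worker $\Rightarrow$ the loads of the other two workers sum to $10$ $\Rightarrow$ one of them is $\leq 5$ — uses only the incidence pattern of the figure, so that it covers \emph{every} admissible $\boldx$ and not merely the point produced in the existence step.
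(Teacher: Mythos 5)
Your proof is correct and follows essentially the same route as the paper: exhibit the point $(9,0,0,0,5,5)$, observe that integrality and non-negativity force every red edge to carry $0$, and deduce that any admissible $\boldx$ has loads $9$, $k$, $10-k$, whence $\ell^{\min}\leq 5$. Your write-up is in fact slightly more careful than the paper's, which leaves the final step $\ell^{\min}\leq 5$ implicit (and lists the range $k=0,\ldots,10$ where the endpoints should be excluded to keep $\ell^{\max}=9$).
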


\begin{proof}
The point $(x_{u_1w_1},x_{u_1w_2},x_{u_1w_3},x_{u_2w_1},x_{u_2w_2},x_{u_2w_3})=(9,0,0,0,5,5)$ is in $X^{\bolda}$ and is such that $\ell^{\max}(\boldx)=9$. This proves the first part of the claim. Now, note that if there is a red edge in the support of an $\boldx \in X^{\bolda} \cap \bbZ^E$, then $\ell^{\max}(\boldx) \geq 100$. Therefore, the set of all $\boldx \in X^{\bolda} \cap \bbZ^E$ such that $\ell^{\max}(\boldx)=9$ is $\{(9,0,0,0,k,10-k)\colon k=0,\ldots,10\}$. 
\end{proof}

Consider the point $(x_{u_1w_1},x_{u_1w_2},x_{u_1w_3},x_{u_2w_1},x_{u_2w_2},x_{u_2w_3})=(1,4,4,4,3,3)$. This point is such that $\ell^{\max}(\boldx) - \ell^{\min}(\boldx) \leq 2$. Together with the claim, this shows that, over $X^{\bolda} \cap \bbZ^E$, minimizing $\ell^{\max}(\cdot)-\ell^{\min}(\cdot)$ does not necessarily minimize $\ell^{\max}(\cdot)$, contrary to what happens when we minimize over the whole $X^{\bolda}$.

\begin{figure}
\begin{tikzpicture}[thick,
  every node/.style={draw,circle},
  fsnode/.style={fill=myblue},
  ssnode/.style={fill=mygreen},
  every fit/.style={ellipse,draw,inner sep=-0.4cm,text width=2cm, text height=2cm},
  -,
  shorten >= 3pt,
  shorten <= 3pt
]

\begin{scope}[start chain=going below,node distance=7mm]
\foreach \i in {1,2}
  \node[fsnode,on chain] (f\i) [label=left: $u_{\i}$] {};
\end{scope}

\begin{scope}[xshift=4cm,yshift=0.5cm,start chain=going below,node distance=7mm]
\foreach \i in {1,2,3}
  \node[ssnode,on chain] (s\i) [label=right: $w_{\i}$] {};
\end{scope}



\draw[red] (f1) -- (s2);
\draw[red] (f1) -- (s3);
\draw[red] (f2) -- (s1);
\draw (f1) -- (s1);
\draw (f2) -- (s2);
\draw (f2) -- (s3);

\end{tikzpicture}
\caption{\label{fig:example-int}Theorem~\ref{thm:main_a} does not hold as such for integers.}
\end{figure}

\subsection{Non-existence of an optimal solution in Theorem~\ref{thm:main_F}}

Consider the graph of Figure~\ref{fig:example_F}. Assume that $d_{u_1}=d_{u_2}=1$ and that
\begin{align*}
f_{u_1}(\boldx_{\delta(u_1)}) &\coloneqq x_{u_1w_1} + x_{u_1w_2} \, , & f_{u_2}(\boldx_{\delta(u_2)}) & \coloneqq x_{u_2w_1} + x_{u_2w_2} \, , \\ f_{w_1}(\boldx_{\delta(w_1)}) &\coloneqq x_{u_1w_1} + \frac 1 2 x_{u_2w_1}\, , & f_{w_2}(\boldx_{\delta(w_2)}) &\coloneqq x_{u_2w_2} + \frac 1 2 x_{u_1w_2} \, .
\end{align*}
Setting $x_{u_1w_1} = x_{u_2w_2} = t$ and $x_{u_1w_2} = x_{u_2w_1} = 1-t$ and letting $t$ go to $-\infty$ shows that $\inf_{x\in X^f}\ell^{\max}(\boldx) = -\infty$, while there is clearly no $\boldx$ in $X^f$ achieving this value.

\begin{figure}
\begin{tikzpicture}[thick,
  every node/.style={draw,circle},
  fsnode/.style={fill=myblue},
  ssnode/.style={fill=mygreen},
  every fit/.style={ellipse,draw,inner sep=-0.4cm,text width=2cm, text height=2cm},
  -,
  shorten >= 3pt,
  shorten <= 3pt
]

\begin{scope}[start chain=going below,node distance=7mm]
\foreach \i in {1,2}
  \node[fsnode,on chain] (f\i) [label=left: $u_{\i}$] {};
\end{scope}

\begin{scope}[xshift=4cm,start chain=going below,node distance=7mm]
\foreach \i in {1,2}
  \node[ssnode,on chain] (s\i) [label=right: $w_{\i}$] {};
\end{scope}

\draw (f1) -- (s2);
\draw (f1) -- (s1);
\draw (f2) -- (s1);
\draw (f2) -- (s2);

\end{tikzpicture}
\caption{\label{fig:example_F}There does not necessarily exist an $\boldx \in X^ f$ minimizing $\ell^{\max}(\boldx)$ in Theorem~\ref{thm:main_F}.}
\end{figure}

\section{Proof of Theorem~\ref{thm:main_a}}\label{sec:proof-a}

In this section, we prove Theorem~\ref{thm:main_a}. This requires a preliminary lemma. We finish the section with a short proof of Proposition~\ref{prop:ini}, relying on Theorem~\ref{thm:main_a}.

Let $\boldx\in X^{\bolda}$. We denote by $W^{\max}(\boldx)$ (resp.\ $W^{\min}(\boldx)$) the set of vertices $w$ in $W$ for which $\bolda^w \cdot \boldx_{\delta(w)} = \ell^{\max}(\boldx)$ (resp.\ $\bolda^w \cdot\boldx_{\delta(w)} =\ell^{\min}(\boldx)$). We also define $U^{\max}(\boldx)$ to be the set of vertices $u\in U$ for which there exists $w\in W^{\max}(\boldx)$ with $x_{uw}>0$.

\begin{lemma}\label{lem:claim}
Suppose $d_u > 0$ for at least one $u \in U$. Let $\boldx \in X^{\bolda}$. If $N(U^{\max}(\boldx))\neq W^{\max}(\boldx)$, then there exists $\boldx' \in X^{\bolda}$ such that
\begin{itemize}
    \item $|W^{\max}(\boldx')| \leq |W^{\max}(\boldx)|$,
    \item $\ell^{\max}(\boldx') \leq \ell^{\max}(\boldx)$,
    \item $\ell^{\min}(\boldx') \geq \ell^{\min}(\boldx)$,
\end{itemize}
with at least one of these inequalities being strict.
\end{lemma}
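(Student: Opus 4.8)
The plan is to think of the condition $N(U^{\max}(\boldx)) \neq W^{\max}(\boldx)$ as telling us that there is a worker $w_0 \in N(U^{\max}(\boldx)) \setminus W^{\max}(\boldx)$, i.e.\ some maximally loaded task-neighbor set reaches outside the set of maximally loaded workers. Concretely, there is an edge $u_0 w_0$ with $x_{u_0 w_0} > 0$, some $w_1 \in W^{\max}(\boldx)$ adjacent to $u_0$ (so $u_0 \in U^{\max}(\boldx)$), and $w_0 \notin W^{\max}(\boldx)$, hence $\bolda^{w_0} \cdot \boldx_{\delta(w_0)} < \ell^{\max}(\boldx)$. The idea is to shift a small amount of the demand of $u_0$ away from some maximally loaded worker $w_1$ and onto $w_0$: decrease $x_{u_0 w_1}$ and increase $x_{u_0 w_0}$, keeping $\bolda^{u_0} \cdot \boldx_{\delta(u_0)} = d_{u_0}$ unchanged (this is possible since both coefficients $a^{u_0}_{u_0w_1}, a^{u_0}_{u_0w_0}$ are positive, so a decrease on one edge can be compensated by an increase on the other). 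Since $x_{u_0w_0} > 0$ initially, and we only increase it, the non-negativity constraint there is fine; we do need $x_{u_0 w_1}$ to stay $\geq 0$, which bounds how far we can push.

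The key steps, in order, are as follows. First, I would pick the right $u_0, w_0, w_1$ from the hypothesis; a subtlety is that I want $w_1$ chosen so the move genuinely helps, and more importantly I want to make the move simultaneously from a maximal worker and toward a \emph{strictly submaximal} worker. Second, I would define a one-parameter family $\boldx(\varepsilon)$ by decreasing $x_{u_0 w_1}$ by $\delta_1(\varepsilon)$ and increasing $x_{u_0 w_0}$ by $\delta_0(\varepsilon)$, with $a^{u_0}_{u_0w_1}\delta_1 = a^{u_0}_{u_0w_0}\delta_0$, so $\boldx(\varepsilon) \in X^{\bolda}$ for $\varepsilon$ small. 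Third, I would track how the loads change: the load of $w_1$ strictly decreases, the load of $w_0$ strictly increases, all other worker loads are unchanged, and no $U$-constraint is violated. Fourth — this is the crucial bookkeeping — I would choose $\varepsilon$ to be the largest value such that (i) $x_{u_0 w_1}(\varepsilon) \geq 0$, (ii) $w_0$'s load does not exceed $\ell^{\max}(\boldx)$, and (iii) $w_1$'s load does not drop below any other currently-smaller load that we care about. At that extremal $\varepsilon$, one of these becomes tight, and I claim the resulting $\boldx'$ satisfies the three bulleted inequalities with at least one strict: either $w_1$ has left $W^{\max}$ (so $|W^{\max}|$ strictly drops, if $w_1$ was the bottleneck and $w_0$ did not join) — and in all cases $\ell^{\max}(\boldx') \leq \ell^{\max}(\boldx)$ since no load was pushed above the old max, and $\ell^{\min}(\boldx') \geq \ell^{\min}(\boldx)$ since the only load that decreased was $w_1$'s, which started at the maximum (strictly above the minimum, as $w_1 \in W^{\max} \setminus W^{\min}$ — here we use the global hypothesis $\ell^{\max} > \ell^{\min}$, or at least that $|W| \geq 2$), and we can stop before it reaches $\ell^{\min}(\boldx)$.

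The main obstacle is getting a \emph{clean} case analysis that guarantees strictness of at least one inequality without accidentally making another one fail. The delicate point: if I push $\varepsilon$ until $w_0$'s load hits $\ell^{\max}(\boldx)$, then $|W^{\max}|$ might not decrease (we removed $w_1$ but added $w_0$), and $\ell^{\max}$ did not strictly decrease either; the strict improvement then has to come from $\ell^{\min}$, which only helps if $w_1 \in W^{\min}(\boldx)$ — but $w_1 \in W^{\max}(\boldx)$ and $\ell^{\max} > \ell^{\min}$ forces $w_1 \notin W^{\min}(\boldx)$, so that route gives nothing. So I need to argue that in this branch we can instead choose $\varepsilon$ small enough that $w_0$ stays strictly below $\ell^{\max}$ \emph{and} $x_{u_0w_1}$ stays positive — in which case $|W^{\max}|$ strictly drops (as $w_1$ leaves and no new worker enters) — unless the binding constraint is $x_{u_0 w_1}(\varepsilon) = 0$ with $w_1$ still at load $\ell^{\max}(\boldx)$; but that cannot happen, because removing the entire contribution $a^{w_1}_{u_0w_1} x_{u_0w_1}$ strictly lowers $w_1$'s load below $\ell^{\max}(\boldx)$. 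Once this trichotomy (blocked by $w_0$ reaching the max, blocked by $x_{u_0w_1}$ reaching $0$, or blocked by nothing before $w_1$ drops out of $W^{\max}$) is organized correctly, each case yields the required conclusion, and I expect the rest to be routine verification of the three monotone inequalities.
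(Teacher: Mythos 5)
Your perturbation is the one the paper itself uses (decrease $x_{u_0w_1}$ for a maximal worker $w_1$, increase $x_{u_0w_0}$ for a submaximal neighbor $w_0$ of the same $u_0$, rebalancing via the $u_0$-constraint), and your final branch --- take $\varepsilon$ small enough that $x_{u_0w_1}$ stays positive and $w_0$'s load stays strictly below $\ell^{\max}(\boldx)$, so that $w_1$ leaves $W^{\max}$ and nobody enters --- is exactly the paper's argument; the detour through an extremal $\varepsilon$ and its case analysis is unnecessary. However, the way you select the triple $(u_0,w_0,w_1)$ contains a genuine gap. You read the hypothesis $N(U^{\max}(\boldx))\neq W^{\max}(\boldx)$ as directly providing a $w_0\in N(U^{\max}(\boldx))\setminus W^{\max}(\boldx)$, but a priori the inequality of sets could be realized by $N(U^{\max}(\boldx))\subsetneq W^{\max}(\boldx)$, in which case that difference is empty. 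One must first prove $W^{\max}(\boldx)\subseteq N(U^{\max}(\boldx))$, and this is precisely where the hypothesis ``$d_u>0$ for at least one $u$'' --- which your proof never invokes --- is used: it forces $\ell^{\max}(\boldx)>0$, hence every $w\in W^{\max}(\boldx)$ carries some edge $uw$ with $x_{uw}>0$, hence $u\in U^{\max}(\boldx)$ and $w\in N(U^{\max}(\boldx))$. The hypothesis is not decorative: if all $d_u=0$ then $X^{\bolda}=\{\zero\}$, $U^{\max}(\boldx)=\emptyset$, the set hypothesis holds, and no improving $\boldx'$ exists, so a proof that never uses it cannot be complete.

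A second, related slip: you place the positivity on the wrong edge. The definition of $U^{\max}(\boldx)$ hands you $x_{u_0w_1}>0$ for some $w_1\in W^{\max}(\boldx)$ --- exactly the variable you must be able to decrease --- whereas for $w_0$ you only get $u_0w_0\in E$, and $x_{u_0w_0}$ may well be $0$ (harmless, since you only increase it). As written you assert $x_{u_0w_0}>0$ and only ``adjacency'' for $w_1$; mere adjacency does not allow any decrease of $x_{u_0w_1}$, and your later observation that ``removing the entire contribution $a^{w_1}_{u_0w_1}x_{u_0w_1}$ strictly lowers $w_1$'s load'' silently assumes the positivity you never secured. Two smaller points: you do not need any ``global hypothesis $\ell^{\max}>\ell^{\min}$'' to protect the $\ell^{\min}$ bullet, since the very existence of $w_0\notin W^{\max}(\boldx)$ already gives $\ell^{\min}(\boldx)<\ell^{\max}(\boldx)$; and when $W^{\max}(\boldx)=\{w_1\}$ the cardinality $|W^{\max}|$ cannot strictly drop, so the strictness must then be carried by $\ell^{\max}(\boldx')<\ell^{\max}(\boldx)$ --- which is how the paper states the disjunction. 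With these corrections your argument coincides with the paper's proof.
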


\begin{proof}
Suppose that $N(U^{\max}(\boldx))\neq W^{\max}(\boldx)$. Since $d_u >0$ for at least one $u$, each vertex $w \in W^{\max}(\boldx)$ is incident to at least one edge $uw$ with $x_{uw} > 0$, and thus $W^{\max}(\boldx)\subseteq N(U^{\max}(\boldx))$. Hence, there exist $u'\in U^{\max}(\boldx)$ and $w' \in W\setminus W^{\max}(\boldx)$ such that $u'w'\in E$. Still by definition, there exists $w''\in W^{\max}(\boldx)$ such that $x_{u'w''}>0$. Therefore, there exists $\boldx' \in X^{\bolda}$ very close to $\boldx$ with $x'_e = x_e$ for $e\notin\{u'w'', u'w'\}$, $x'_e < x_e$ for $e =u'w''$, and $x'_e > x_e$ for $e =u'w'$. The way $\boldx'$ is built implies that $\ell^{\max}(\boldx') \leq \ell^{\max}(\boldx)$ and $\ell^{\min}(\boldx') \geq \ell^{\min}(\boldx)$. Moreover, since $\bolda^{w''}\cdot\boldx'_{\delta(w'')} < \bolda^{w''}\cdot\boldx_{\delta(w'')}$, we have $\ell^{\max}(\boldx') < \ell^{\max}(\boldx)$ or $|W^{\max}(\boldx')|< |W^{\max}(\boldx)|$.
\end{proof}

\begin{proof}[Proof of Theorem~\ref{thm:main_a}]
We assume that $d_u > 0$ for at least one $u$ in $U$ since otherwise there is nothing to prove. Let $\boldx^*$ be minimizing $\ell^{\max}(\cdot) - \ell^{\min}(\cdot)$ over $X^{\bolda}$. We prove that $\boldx^*$ also minimizes $\ell^{\max}(\cdot)$. The proof that it also maximizes $\ell^{\min}(\cdot)$ is omitted since it follows exactly the same lines.

Applying Lemma~\ref{lem:claim} shows that there exists $\boldx' \in X^{\bolda}$ such that $|W^{\max}(\boldx')| \leq |W^{\max}(\boldx^*)|$ with $\ell^{\max}(\boldx') \leq \ell^{\max}(\boldx^*)$ and $\ell^{\min}(\boldx') \geq \ell^{\min}(\boldx^*)$. By optimality of $\boldx^*$, we have actually $|W^{\max}(\boldx')| < |W^{\max}(\boldx^*)|$, $\ell^{\max}(\boldx') = \ell^{\max}(\boldx^*)$, and $\ell^{\min}(\boldx') =\ell^{\min}(\boldx^*)$. Since 
$|W^{\max}(\boldx')|$ cannot become arbitrarily small, repeated applications of Lemma~\ref{lem:claim} leads then to the existence of an $\bar \boldx \in X^{\bolda}$ such that $N(U^{\max}(\bar \boldx)) = W^{\max}(\bar \boldx)$, with $\ell^{\max}(\bar \boldx) = \ell^{\max}(\boldx^*)$ and $\ell^{\min}(\bar\boldx) = \ell^{\min}(\boldx^*)$.

Since $\ell^{\max}(\bar\boldx) - \ell^{\min}(\bar\boldx) > 0$, the sets $W^{\max}(\bar\boldx)$ and $W^{\min}(\bar\boldx)$ are disjoint. Consider now a $\boldy^*$ minimizing $\ell^{\max}(\cdot)$ over $X^{\bolda}$ (which exists by compactness). Define the following solution for $t \in [0,1]$:
\[
z_e(t)= \left\{
\begin{array}{ll} ty_e^* + (1-t)\bar x_e & \text{if $e\in\delta(U^{\max}(\bar\boldx))$,} \\ [\jot] \bar x_e &  \text{if $e\notin\delta(U^{\max}(\bar\boldx))$.}
\end{array}
\right.
\]
The vector $\boldz(t)$ belongs to $X^{\bolda}$ for all $t \in [0,1]$. Suppose for a contradiction that $\ell^{\max}(\boldy^*) <\ell^{\max}(\bar \boldx)$. Then, by linearity, $\bolda^w\cdot\boldz(t)$ is decreasing on $[0,1]$ for $w \in W^{\max}(\bar \boldx)$, while it is constant for $w \notin W^{\max}(\bar \boldx)$. Since $\ell^{\max}(\bar \boldx)=\bolda^w\cdot\boldz(0)>\bolda^{w'}\cdot\boldz(0)$ for $w \in W ^{\max}(\bar \boldx)$ and $w'\notin W ^{\max}(\bar \boldx)$, there exists a small $\bar t >0$ for which we still have $\bolda^w\cdot\boldz(\bar t)>\bolda^{w'}\cdot\boldz(\bar t)$ for $w \in W ^{\max}(\bar \boldx)$ and $w'\notin W ^{\max}(\bar \boldx)$. On the other hand, we have $\bolda^{w'}\cdot\boldz(t) = \bolda^{w'}\cdot\bar\boldx$ for $w'\notin W ^{\max}(\bar \boldx)$. We have thus $\ell^{\max}(\boldz(\bar t)) - \ell^{\min}(\boldz(\bar t)) < \ell^{\max}(\bar \boldx) - \ell^{\min}(\bar\boldx)$, which is a contradiction. Therefore $\ell^{\max}(\boldy^*)=\ell^{\max}(\bar \boldx)$. Since $\ell^{\max}(\bar\boldx)=\ell^{\max}(\boldx^*)$, we get that $\boldx^*$ also minimizes $\ell^{\max}(\cdot)$ over $X^{\bolda}$. 
\end{proof}

Proposition~\ref{prop:ini} states that when $a_e^v=1$ for all vertices $v$ and all edges $e\in\delta(v)$, we do not need the assumption that $\ell^{\max}(\cdot) - \ell^{\min}(\cdot)$ is bounded away from $0$ on $X^{\bolda}$. Actually, in this special case, when there exists $\boldx \in X^{\bolda}$ such that $\ell^{\max}(\boldx) = \ell^{\min}(\boldx)$, a simple calculation leads to the desired conclusion.

\begin{proof}[Proof of Proposition~\ref{prop:ini}]
    Let $\boldx^* \in X$ be such that $\ell^{\max}(\boldx^*)-\ell^{\min}(\boldx^*)$ is minimal. If $\ell^{\max}(\boldx^*)-\ell^{\min}(\boldx^*) > 0$, then we apply Theorem~\ref{thm:main_a}. Suppose thus that $\ell^{\max}(\boldx^*)-\ell^{\min}(\boldx^*) = 0$. All $\bolda^w\cdot\boldx_{\delta(w)}^*$ are equal. Note that for every $\boldx\in X$, we have 
$\sum_{w \in W}\bolda^w\cdot\boldx_{\delta(w)} = \sum_{u \in U}\sum_{e\in\delta(u)} x_e=\sum_{u\in U}d_u$. This equality implies $\ell^{\max}(\boldx) \geq \frac 1 {|W|} \sum_{u\in U}d_u$ for every $\boldx\in X$, and implies $\bolda^w\cdot\boldx_{\delta(w)}^* =\frac 1 {|W|} \sum_{u\in U}d_u$ since the $\bolda^w\cdot\boldx_{\delta(w)}^*$ are all equal. The vector $\boldx^*$ also minimizes $\ell^{\max}(\cdot)$.    
\end{proof}

\section{Proof of Theorem~\ref{thm:main_F}}\label{sec:proof-F}

In this section, we prove Theorem~\ref{thm:main_F}. The general strategy consists in proving the result when $G$ is a tree---for which a slightly stronger result is actually proved (existence and uniqueness of the minimizer of $\ell^{\max}(\cdot)$ over $X^f$)---and to extend it to all connected bipartite graphs by working on spanning trees.

The proof requires two preliminary lemmas. The first lemma is an easy technical lemma about the maps in $\calF_A$. The second lemma shows that, up to one vertex in $W$, the values of the $\ell_w(\boldx_{\delta(w)})$ can be arbitrarily fixed. 

\begin{lemma}\label{lem:inv}
    Let $f \in \calF_A$ with $|A| \geq 2$ and let $j$ and $j'$ be two distinct elements in $A$. For every $\boldy \in \bbR^{A\setminus\{j,j'\}}$ and every $a \in \bbR$, there exists a continuous decreasing map $g\colon \bbR \rightarrow \bbR$, depending on $\boldy$ and $a$, such that $f(\boldy,x_j,x_{j'})=a$ if and only if $x_{j'} = g(x_j)$.
\end{lemma}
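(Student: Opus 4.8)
The plan is to fix $\boldy \in \bbR^{A \setminus \{j,j'\}}$ and $a \in \bbR$, and to build the map $g$ by solving, for each value $x_j \in \bbR$, the equation $f(\boldy, x_j, x_{j'}) = a$ for the unknown $x_{j'}$. First I would observe that, since $f \in \calF_A$, for any fixed value of $x_j$ the map $x_{j'} \mapsto f(\boldy, x_j, x_{j'})$ is an increasing self-bijection of $\bbR$; in particular it is a continuous bijection, so there is a unique $x_{j'}$ solving $f(\boldy, x_j, x_{j'}) = a$. Define $g(x_j)$ to be this unique solution. This already gives the claimed equivalence: $f(\boldy, x_j, x_{j'}) = a$ if and only if $x_{j'} = g(x_j)$.

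It remains to check that $g$ is decreasing and continuous. For monotonicity, suppose $x_j^{(1)} < x_j^{(2)}$. Since $f$ restricted to the coordinate $x_j$ (with the other coordinates fixed at $\boldy$ and at $x_{j'} = g(x_j^{(1)})$) is increasing, we get $f(\boldy, x_j^{(2)}, g(x_j^{(1)})) > f(\boldy, x_j^{(1)}, g(x_j^{(1)})) = a$. But $f(\boldy, x_j^{(2)}, g(x_j^{(2)})) = a$, and $x_{j'} \mapsto f(\boldy, x_j^{(2)}, x_{j'})$ is increasing, so $g(x_j^{(2)}) < g(x_j^{(1)})$. Hence $g$ is strictly decreasing. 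For continuity: a monotone function on $\bbR$ is continuous as soon as its image is an interval (it can only have jump discontinuities, and a jump would leave a gap in the image). The image of $g$ is all of $\bbR$: given any target $x_{j'} \in \bbR$, the map $x_j \mapsto f(\boldy, x_j, x_{j'})$ is an increasing self-bijection of $\bbR$, so there is a (unique) $x_j$ with $f(\boldy, x_j, x_{j'}) = a$, and for that $x_j$ we have $g(x_j) = x_{j'}$. Therefore $g$ is a decreasing self-bijection of $\bbR$, and in particular continuous.

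There is essentially no obstacle here; the only points requiring a little care are the well-definedness of $g$ (uniqueness of the solution, which comes from strict monotonicity of $f$ in the $x_{j'}$ coordinate) and the continuity argument, for which the cleanest route is to note that a monotone map whose image is all of $\bbR$ is automatically continuous, rather than arguing continuity directly from an $\varepsilon$--$\delta$ estimate on $f$. Note also that the hypothesis $|A| \geq 2$ is exactly what makes the two distinct indices $j, j'$ available; the vector $\boldy$ may be empty (when $|A| = 2$), in which case the argument is unchanged with $\boldy$ omitted.
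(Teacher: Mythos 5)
Your proof is correct. The well-definedness of $g$ and its strict decrease are established essentially as in the paper (the paper invokes the intermediate value theorem on $h_t\colon s\mapsto f(\boldsymbol{y},t,s)$ and then simply asserts monotonicity of $g$ from componentwise monotonicity of $f$; you spell the latter out, which is fine). Where you genuinely diverge is the continuity argument. The paper proceeds sequentially: for $t_n\to\bar t$ it shows $(g(t_n))$ is bounded (else $|f(\boldsymbol{y},t_n,g(t_n))|\to+\infty$), extracts an accumulation point $\bar s$, and identifies it as $g(\bar t)$ by continuity of $f$ and uniqueness of the solution. You instead observe that $g$ is surjective onto $\bbR$ --- because for any target $x_{j'}$ the map $x_j\mapsto f(\boldsymbol{y},x_j,x_{j'})$ is itself an increasing self-bijection of $\bbR$, so some $x_j$ satisfies $g(x_j)=x_{j'}$ --- and then use the standard fact that a monotone map on $\bbR$ whose image is an interval has no jumps. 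Your route is shorter, avoids subsequence extraction, and yields the slightly stronger conclusion that $g$ is a decreasing self-bijection of $\bbR$ (a pleasant bonus, though not needed later in the paper); the paper's sequential argument is more robust in that it only uses continuity and uniqueness of solutions, not surjectivity in the $x_j$ coordinate. Both are complete proofs of the lemma as stated.
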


\begin{proof}
    Let $t \in \bbR$. The map $h_t\colon s \in \bbR\mapsto f(\boldy,t,s) \in \bbR$ is a continuous increasing map, which goes from $-\infty$ to $+\infty$. By the intermediate value theorem, there exists a unique $s$ for which the image of $h_t$ is $a$. We set $g(t)$ to that unique $s$. Since $f$ is componentwise increasing, we have that $g$ is decreasing.

    We prove now that $g$ is continuous. Let $(t_n)$ be a sequence of positive real numbers converging to some $\bar t$. By construction, we have $f(\boldy,t_n,g(t_n))=a$ for all $n$. The sequence $(g(t_n))$ remains bounded since otherwise there would exists an infinite sequence $n_1,n_2,\ldots$ such that the $|g(t_{n_i})|$ would go to $+\infty$, which is impossible because then $|f(\boldy,t_n,g(t_n))|$ would go to $+\infty$ as well. Let $\bar s$ be any accumulation point of $(g(t_n))$. By the continuity of $f$, we have $f(\boldy,\bar t,\bar s)=a$. By the uniqueness of the $s$ such that $h_{\bar t}(s)=a$, we have $g(\bar t)=\bar s$. The accumulation point is thus unique, and $\lim_ng(t_n)=g(\lim_nt_n)$.
\end{proof}

\begin{lemma}\label{lem:fixing}
    Assume that $G$ is a tree with at least one edge. Let $w_0 \in W$ and a collection $\ell_w \in \bbR$ for $w \in W \setminus \{w_0\}$. Then, there exists a unique $\boldx \in X^f$ such that $f_w(\boldx_{\delta(w)}) = \ell_w$ for all $w \in W \setminus \{w_0\}$.
\end{lemma}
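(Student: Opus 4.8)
\textbf{Plan for the proof of Lemma~\ref{lem:fixing}.}
The natural approach is induction on the number of edges of the tree $G$, peeling off leaves. First I would dispose of the base case: if $G$ has exactly one edge $u_0w_0$, then $W=\{w_0\}$, the collection $(\ell_w)_{w\in W\setminus\{w_0\}}$ is empty, and we only need $f_{u_0}(x_{u_0w_0})=d_{u_0}$; since $f_{u_0}\in\calF_{\{u_0w_0\}}$ is an increasing self-bijection of $\bbR$ in its single variable, there is a unique such $x_{u_0w_0}$, hence a unique $\boldx\in X^f$, and the condition on $W\setminus\{w_0\}$ is vacuous.

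For the inductive step, I would look at a leaf of $G$ and treat two cases according to whether it lies in $U$ or in $W$. If the leaf is some $u\in U$ with unique neighbour $w$, then $x_{uw}$ is forced: the equation $f_u(x_{uw})=d_u$ has a unique solution by the self-bijection property, so $x_{uw}$ is determined independently of everything else; we remove $u$ from $G$, and for the new tree $G'=G-u$ we must absorb the now-fixed value $x_{uw}$ into $f_w$ by defining $f'_w(\boldx_{\delta_{G'}(w)})\coloneqq f_w(\boldx_{\delta_{G'}(w)},x_{uw})$, which again lies in $\calF_{\delta_{G'}(w)}$ (restricting a map in $\calF_A$ by fixing one coordinate yields a map in $\calF_{A\setminus\{i\}}$). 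We then apply the induction hypothesis to $G'$ with the same $w_0$ (if $w\neq w_0$ we keep the prescribed $\ell_w$; if $w=w_0$ there is nothing to prescribe for it anyway) to get a unique completion, and uniqueness for $G$ follows since $x_{uw}$ had no freedom. Care is needed if $G'$ has no edge, i.e.\ $G$ was a path of length one from $u$ through $w=w_0$ to nothing else — but then $w=w_0$ is also forced to be a leaf of $G$; this degenerates into the base case and is handled directly.

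The more interesting case is a leaf $w\in W$ with $w\neq w_0$ and unique neighbour $u$. Here $\ell_w$ is prescribed, so the equation $f_w(x_{uw})=\ell_w$ determines $x_{uw}$ uniquely, and as before we delete $w$, fold the fixed value $x_{uw}$ into $f_u$ via $f'_u(\boldx_{\delta_{G'}(u)})\coloneqq f_u(\boldx_{\delta_{G'}(u)},x_{uw})\in\calF_{\delta_{G'}(u)}$, and invoke the induction hypothesis on $G'=G-w$ with the same distinguished vertex $w_0$ and the restricted collection $(\ell_{w'})_{w'\in W\setminus\{w_0,w\}}$. The one subtlety is that deleting $w$ might leave $u$ isolated (when $u$ had degree one); then $G'$ consists of isolated vertices plus possibly a component containing $w_0$. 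This is where I need a small amount of bookkeeping: $X^f$ lives over the edge set, so isolated $U$-vertices are ruled out by hypothesis in Theorem~\ref{thm:main_F}, but inside this lemma's induction I should either arrange that the leaf $w$ chosen is never the unique neighbour of a degree-one $u$ — always possible in a tree with at least one edge unless the tree is a single edge, since a tree with $\geq 2$ edges has a leaf whose neighbour has degree $\geq 2$ — or simply carry the constraint $f'_u(\text{empty})=d_u$ as a consistency check. Choosing the leaf well is cleanest: pick any leaf $w$ of $G$ whose neighbour has degree $\geq 2$; if $w\in U$ we are in the second paragraph's case, if $w\in W$ we may still have $w=w_0$, in which case instead pick a different leaf, and if \emph{every} leaf equals $w_0$ or is adjacent to a degree-one vertex one checks directly that $G$ must be a single edge. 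So the main obstacle is organizing this leaf-selection so that the induction never produces an inconsistent or ill-defined subproblem; once that is pinned down, existence and uniqueness both propagate immediately because at each step exactly one coordinate is removed and it is forced to a unique value.
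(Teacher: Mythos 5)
Your proof is correct and is essentially the paper's argument: a recursion on the tree that uses the self-bijection property of each $f_v$ in a single coordinate to force one edge value at a time. The paper organizes the same recursion by rooting the tree at $w_0$ and propagating from the leaves toward the root, which makes your leaf-selection bookkeeping (avoiding $w_0$, avoiding isolating a neighbour) automatic, since every non-root vertex determines its unique parent edge and $w_0$, being the unconstrained root, is simply never used.
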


\begin{proof}
    See $G$ as a tree rooted at $w_0$. The $d_u$ and $\ell_w$ attached to the leaves determine completely $x_e$ for the edges $e$ incident to the leaves. Recurse.
\end{proof}

We establish now the slightly stronger version of Theorem~\ref{thm:main_F} that holds when $G$ is a tree.

\begin{lemma}\label{lem:existence}
    Assume that $G$ is a tree with at least one edge. Then there exists a unique $\boldx^*$ minimizing $\ell^{\max}(\cdot)$ over $X^f$. Moreover, this $\boldx^*$ is such that $\ell^{\max}(\boldx^*)=\ell^{\min}(\boldx^*)$ and it maximizes $\ell^{\min}(\cdot)$ over $X^f$.
\end{lemma}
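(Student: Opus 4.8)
The plan is to parametrize $X^f$ using Lemma~\ref{lem:fixing}: fix the root $w_0 \in W$ and observe that the map $\Phi\colon \bbR^{W\setminus\{w_0\}} \to X^f$ sending $(\ell_w)_{w\neq w_0}$ to the unique $\boldx\in X^f$ with $f_w(\boldx_{\delta(w)})=\ell_w$ for all $w\neq w_0$ is a bijection. (If $W=\{w_0\}$, then by Lemma~\ref{lem:fixing} applied trivially $X^f$ is a single point and everything is vacuous; so assume $|W|\geq 2$.) Composing with $f_{w_0}$, we get a function $\ell_{w_0} = F\bigl((\ell_w)_{w\neq w_0}\bigr)$; I would first establish that $F$ is continuous and, crucially, \emph{strictly decreasing in each coordinate $\ell_w$}. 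Continuity follows by unwinding the recursion in Lemma~\ref{lem:fixing} together with the continuity half of Lemma~\ref{lem:inv} (each $x_e$ along the tree is obtained by finitely many applications of the $g$-maps and of the componentwise-increasing bijections coming from $\calF$). Strict monotonicity is the substantive point: increasing a single $\ell_w$ forces, going up toward the root, a chain of strict changes in the $x_e$'s that ultimately strictly decreases $f_{w_0}$; here one uses that each $f_v\in\calF_{\delta(v)}$ is a strictly increasing self-bijection in every component, so no change can be ``absorbed.'' I expect this monotonicity bookkeeping along the tree to be the main obstacle.

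Granting that, minimizing $\ell^{\max}(\boldx)=\max\bigl(F((\ell_w)_{w\neq w_0}),\,\max_{w\neq w_0}\ell_w\bigr)$ over $(\ell_w)_{w\neq w_0}\in\bbR^{W\setminus\{w_0\}}$ becomes transparent. I would argue that at an optimum all the $\ell_w$ must be equal to $F((\ell_w)_{w\neq w_0})$: if some $\ell_w$ were strictly below the current maximum, we could increase it slightly, which (by strict monotonicity of $F$) strictly decreases $F$ if $F$ was the unique maximizer, or in general allows us to strictly decrease $\ell^{\max}$ by balancing; a short case analysis using that $F$ strictly decreases when we raise any $\ell_w$ shows that as long as the values $\ell_{w_0}=F(\cdots)$ and $(\ell_w)_{w\neq w_0}$ are not all equal, one can strictly decrease $\ell^{\max}$. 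Hence a minimizer of $\ell^{\max}$ must satisfy $\ell_{w_0}=\ell_w$ for all $w$, i.e. $\ell^{\max}(\boldx^*)=\ell^{\min}(\boldx^*)$.

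For existence and uniqueness of the common value, consider the function $\varphi(c) \coloneqq F(c,c,\ldots,c) - c$ on the diagonal: as $c$ increases, $F(c,\ldots,c)$ strictly decreases (each coordinate is raised) while $-c$ strictly decreases too, so $\varphi$ is strictly decreasing; I would check it is continuous and tends to $+\infty$ as $c\to-\infty$ and to $-\infty$ as $c\to+\infty$ (the first because $F$ is decreasing so stays bounded below along $c\to-\infty$ relative to $-c\to+\infty$; more carefully, $F(c,\dots,c)\to+\infty$ as $c\to-\infty$ since lowering all the $\ell_w$ raises $\ell_{w_0}$ unboundedly, and symmetrically at $+\infty$). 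By the intermediate value theorem there is a unique $c^*$ with $F(c^*,\ldots,c^*)=c^*$, giving a unique candidate $\boldx^*=\Phi(c^*,\ldots,c^*)$ with $\ell^{\max}(\boldx^*)=\ell^{\min}(\boldx^*)=c^*$; by the previous paragraph this is the unique minimizer of $\ell^{\max}(\cdot)$, and its value $c^*$ is $\leq \ell^{\max}(\boldx)$ for all $\boldx$ while also $c^*=\ell^{\min}(\boldx^*)\geq\ell^{\min}(\boldx)$ (since $\ell^{\min}(\boldx)\leq\ell^{\max}(\boldx)$ is not directly enough — rather, repeat the balancing argument symmetrically to see $c^*$ maximizes $\ell^{\min}$, or note that any $\boldx$ with $\ell^{\min}(\boldx)>c^*$ would have all $\ell_w>c^*$, forcing $\ell_{w_0}=F(\cdots)<c^*$, contradicting $\ell_{w_0}\geq\ell^{\min}(\boldx)>c^*$). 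This yields all three assertions.
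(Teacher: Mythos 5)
Your proof is correct in substance but takes a genuinely different route from the paper's. The paper proves existence of a minimizer by taking a minimizing sequence, extracting coordinatewise monotone subsequences, and ruling out infinite limits via a parity argument on alternating paths; it then proves $\ell^{\max}(\boldx^*)=\ell^{\min}(\boldx^*)$ by a local perturbation along a path from a non-maximal $w'$ to a maximal $w''$ that reduces the number of max-attaining vertices, gets uniqueness from Lemma~\ref{lem:fixing}, and only sketches the $\ell^{\min}$ statement. You instead globalize: Lemma~\ref{lem:fixing} gives a bijection $\Phi\colon\bbR^{W\setminus\{w_0\}}\to X^f$, and the whole lemma reduces to the single function $F=f_{w_0}\circ\Phi$ being continuous and strictly decreasing in each coordinate, after which a one-dimensional intermediate-value argument on the diagonal delivers existence, uniqueness, the equality $\ell^{\max}=\ell^{\min}$, and the maximization of $\ell^{\min}$ all at once. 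The key monotonicity fact you isolate is exactly the mechanism the paper uses locally (raising $\ell_w$ changes only the $x_e$ on the $w$--$w_0$ path, alternating in sign; since that path has even length by bipartiteness, $f_{w_0}$ strictly decreases), so the bookkeeping cost is comparable, but your organization makes the existence and the $\ell^{\min}$ claims essentially free, whereas the paper needs a separate compactness-style argument and a sketched symmetric argument. Do make the bipartite parity explicit when proving that $F$ is \emph{decreasing} rather than increasing--- that is where the hypothesis that both endpoints lie in $W$ enters.

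One step to tighten: your ``balancing'' paragraph shows that any minimizer of $\ell^{\max}$ must lie on the diagonal, but by itself it does not show a minimizer exists, so ``by the previous paragraph this is the unique minimizer'' is not quite a complete deduction. The fix is already contained in your final parenthetical: for any $\boldx\in X^f$ with loads $(\ell_w)_{w\neq w_0}$, if $\ell^{\max}(\boldx)<c^*$ then $\ell_w< c^*$ for all $w\neq w_0$, whence $F((\ell_w))> F(c^*,\ldots,c^*)=c^*$ by strict monotonicity, contradicting $f_{w_0}(\boldx_{\delta(w_0)})\leq\ell^{\max}(\boldx)<c^*$; the same argument with non-strict inequalities gives uniqueness of the minimizer directly. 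State that explicitly for $\ell^{\max}$ (not only for $\ell^{\min}$) and the argument is complete.
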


\begin{proof}
    Let $(\boldx^{(n)})$ be a sequence such that $\ell^{\max}(\boldx^{(n)})$ converges to its infimum. By the monotone subsequence theorem, we can choose the sequence so that for every $e \in E$, the sequence $(x_e^{(n)})$ is monotone and thus converges in $\bbR \cup \{-\infty,+\infty\}$. 

    \smallskip
    
    {\em Existence of $\boldx^*$.} Consider any vertex $v$. If $(x_e^{(n)})$ goes to $+\infty$ for an edge $e$ in $\delta(v)$, then $(x_{e'}^{(n)})$ has to go to $-\infty$ for an edge $e'\in\delta(v)\setminus\{e\}$ since $\left(f_v(\boldx_{\delta(v)}^{(n)})\right)$ remains bounded. Consider now a vertex $u \in U$. If $(x_e^{(n)})$ goes to $-\infty$ for an edge $e$ in $\delta(u)$, then $(x_{e'}^{(n)})$ has to go to $+\infty$ for an edge $e'\in\delta(u)\setminus\{e\}$ since $f_u(\boldx_{\delta(u)}^{(n)})$ is constant equal to $d_u$. We claim that this implies that no edge $e$ in $G$ is such that $\lim_n x_e^{(n)}\in\{-\infty,+\infty\}$. Indeed, pick a maximal path in $G$ whose limits (on the edges) alternate between $-\infty$ and $+\infty$. Its endpoints are necessarily in $W$, and thus the length of the path is even. On the other hand, the edges incident to the endpoints have limits equal to $-\infty$, and the alternation of the limits implies that the length of the path is odd; a contradiction. With the continuity of the $f_v$, the limit $\boldx^*$ of $(\boldx^{(n)})$ is a minimizer of $\ell^{\max}(\cdot)$ over $X^f$.

\smallskip

    {\em Equality $\ell^{\max}(\boldx^*)=\ell^{\min}(\boldx^*)$.} Among all minimizers $\boldx^*$, pick one with a minimal number of vertices $w \in W$ such that $f_w(\boldx^*_{\delta(w)}) = \ell^{\max}(\boldx^*)$. Suppose for a contradiction that there is a vertex $w' \in W$ such that $f_{w'}(\boldx^*_{\delta(w')}) < \ell^{\max}(\boldx^*)$. Consider a path from $w'$ to a vertex $w'' \in W$ with $f_{w''}(\boldx^*_{\delta(w'')}) = \ell^{\max}(\boldx^*)$, and denote its edges $e_1,e_2,\ldots,e_k$ and its vertices $v_0,v_1,\ldots,v_k$, from $w'$ to $w''$. Note that $k$ is even. We will now slightly increase the $x_{e_i}^*$ with $i$ odd and slightly decrease those with $i$ even, maintaining all $\ell_i \coloneqq f_{v_i}(\boldx_{\delta(v_i)}^*)$ for $i=1,\ldots,k-1$, which will lead to a contradiction because this new $\boldx^*$ would have fewer vertices $w \in W$ such that $f_w(\boldx^*_{\delta(w)}) = \ell^{\max}(\boldx^*)$.

    This slight modification of the $x_e$ along the path can be described precisely as follows. For $i \in [k-1]$, let $g_i$ be the map whose existence is ensured by Lemma~\ref{lem:inv} for $\boldy = \boldx_{\delta(v_i)^*\setminus\{e_i,e_{i+1}\}}$, $j = e_i$, $j' = e_{i+1}$, and $a \coloneqq \ell_i = f_{v_i}(\boldx_{\delta(v_i)}^*)$. Set $\varphi_i \coloneqq g_i \circ \cdots \circ g_1$. It is continuous and decreasing (resp.\ increasing) when $i$ is odd (resp.\ even). In particular, $\varphi_{k-1}$ is continuous and decreasing. Define 
    \[
    x_e^*(t) \coloneqq \left\{\begin{array}{rr} t & \text{when $e=e_1$,} \\ \varphi_i(t) & \text{when $e=e_{i+1}$ for $i \in [k-1]$,} \\ x_e^* & \text{otherwise.} \end{array}\right.
    \]
    Note that $x_e^*(\cdot)$ is continuous and that $f_v(\boldx_{\delta(v)}^*(t))$ does  not depend on $t$ except when $v=w'$ and $v=w''$. The set 
    $T=\{t\in\bbR \colon f_{w'}(\boldx_{\delta(w')}^*(t)) < f_{w''}(\boldx_{\delta(w'')}^*(t))\}$ is open by continuity, and non-empty since it contains $x_{e_1}^*$. Choose any $t \in T$ smaller than $x_{e_1}^*$. Since $f_{w''}(\boldx_{\delta(w'')}^*(\cdot))$ is decreasing, this $t$ is such that $f_u(\boldx_{\delta(u)}^*(t)) = d_u$ for all $u$, with $f_w(\boldx_{\delta(w)}^*(t)) \leq \ell^{\max}(\boldx^*)$ for all $w$, and with more vertices $w$ for which this inequality is strict; a contradiction.
    
\smallskip

    {\em Uniqueness.} Uniqueness is a consequence of Lemma~\ref{lem:fixing}, with $\ell_w = \ell^{\max}(\boldx^*)$ for all $w$ but an (arbitrary) $w_0$.

    \smallskip

    {\em The point $\boldx^*$ maximizes $\ell^{\min}(\cdot)$ over $X^f$.} We only sketch the proof because it follows the same lines as above, and moreover this property of $\boldx^*$ is not used elsewhere in the paper. Assume for a contradiction that there exists $\bar\boldx$ such that $\ell^{\min}(\bar\boldx)>\ell^{\max}(\boldx^*)$. Pick an arbitrary vertex $w_0$. Using a similar technique as for proving $\ell^{\max}(\boldx^*)=\ell^{\min}(\boldx^*)$ (changes along $w_0$-$w$ paths for all $w \neq w_0$), we build an $\boldx' \in X^f$ such that $f_w(\boldx'_{\delta(w)})=\ell^{\max}(\boldx^*)$ for all $w \in W\setminus\{w_0\}$ and $f_{w_0}(\boldx'_{\delta(w_0)})>\ell^{\max}(\boldx^*)$. This is not possible because the value of $f_w(\boldx_{\delta(w)})$ for all $w$ but one determines completely all the $x_e$ (Lemma~\ref{lem:fixing}).
\end{proof}

\begin{proof}[Proof of Theorem~\ref{thm:main_F}]
    We assume that $G$ is connected with at least one edge (otherwise there is nothing to prove). Let $\boldx \in X^f$. Take any spanning tree $G'$ of $G$. Letting $x_e$ being fixed for every edge $e$ not in $G'$, we apply Lemma~\ref{lem:existence} on $G'$ to get the existence of the desired $\boldx'$.
\end{proof}

\section{Concluding remarks}\label{sec:concl}

\subsection{Integer versions} We have emphasized in the introduction that Proposition~\ref{prop:ini} has an integer version. Would it possible to have integer versions of Theorem~\ref{thm:main_a} and~\ref{thm:main_F}? The counter-example of Section~\ref{subsec:no-int} shows that we cannot expect a generalization without extra conditions.

\subsection{Relation with classical results in topology} The proof technique of Theorem~\ref{thm:main_F} is topological by many regards. While writing the paper, the authors had the feeling that standard results from topology would not only simplify the proof but also show how to generalize it, e.g., with the $f_v$ being no necessarily componentwise increasing. The Poincaré--Miranda theorem~\cite{miranda1940osservazione,vrahatis1989short} was considered at some point of the writing as a good, yet unsuccessfully, candidate. The relations between the current work and with standard results from topology remain to elucidate.

\subsection{Algorithms} Both Theorems~\ref{thm:main_a} and~\ref{thm:main_F} have algorithmic counterparts: for Theorem~\ref{thm:main_a}, we are in the realm of linear programming; for Theorem~\ref{thm:main_F}, its proof can easily be turned into an algorithm (relying on binary search), which is polynomial under reasonable assumptions on the maps $f_v$.



\bibliographystyle{plain}
\bibliography{load_balancing}

\end{document}